\newtheorem{prop}[subsection]{Proposition}
\newtheorem{lem}[subsection]{Lemma}
\newtheorem{rem}[subsection]{Remark}
\newtheorem{coro}[subsection]{Corollary}
\newtheorem{assumption}[subsection]{Assumption}
\newtheorem{defin}[subsection]{Definition}
\newcommand{\ds}{\displaystyle}
\newcommand{\eps}{\varepsilon}
\newcommand{\Nat}{{\bf N}}
\newcommand{\Prob}{{\bf P}}
\newcommand{\E}{{\bf E}}
\newcommand{\V}{{\bf Var}}
\newcommand{\GPA}{{\textrm{GPA}_f}}
\newcommand{\MPA}{{\textrm{MPA}_f}}
\newcommand{\bpm}{\begin{pmatrix}}
\newcommand{\epm}{\end{pmatrix}}
\begin{document}

\title{Preferential Attachment Processes Approaching The Rado Multigraph}
\author{Richard Elwes}

\begin{abstract}
We consider a preferential attachment process in which a multigraph is built one node at a time. The number of edges added at stage $t$, emanating from the new node, is given by some prescribed function $f(t)$, generalising a model considered by Kleinberg and Kleinberg in 2005 where $f$ was presumed constant. We show that if $f(t)$ is asymptotically bounded above and below by linear functions in $t$, then with probability $1$ the infinite limit of the process will be isomorphic to the \emph{Rado multigraph}. This structure is the natural multigraph analogue of the Rado graph, which we introduce here.
\end{abstract}

\maketitle

\section{Introduction}

In recent decades, there has been much interest in modelling and analysing the many networks which appear in the real world, in contexts such as the world wide web or online social networks. This work has drawn heavily on the mathematical study of random graphs, a subject with its origins in the 1959 work of Erd\H{o}s and R\'enyi, \cite{ER}. They principally studied the graphs which emerge from the following process: begin with a collection of nodes, and independently connect every pair with an edge, with some fixed probability $p$.

Erd\H{o}s-R\'enyi random graph theory has two distinct facets. First, researchers have analysed the finite graphs which arise. Here, questions of interest include the emergence of a giant component and the degree distribution of the nodes, and analyses are typically highly sensitive to the value of $p$. In \cite{Bol}, Bollob\'as provides a comprehensive discussion of such matters.

The second angle of approach is to consider this process on a countably infinite set of nodes. In this case, a remarkable theorem of Erd\H{o}s and R\'enyi guarantees that, irrespective of the value of $p \in (0,1)$, the resulting graph will with probability $1$ be isomorphic to the \emph{Rado graph}. This famous graph is axiomatised by the following schema: given any finite disjoint sets of nodes $U$ and $V$, there exists a node $v$ connected to each node in $V$ and none in $U$. This graph exhibits many properties which logicians and combinatorists enjoy. To start with, it is \emph{universal} in that it isomorphically embeds every finite and countably infinite graph. It is also \emph{countably categorical}, meaning that any two countable models of the above axioms will be isomorphic. The graph is \emph{1-transitive} in that for any any two nodes $v_1, v_2$ there is an automorphism $\alpha$ where $\alpha(v_1)=v_2$. It is  \emph{ultrahomogeneous}: any isomorphism between finite induced subgraphs extends to an automorphism of the whole structure. (Analogues of these facts are proved for a new structure, the Rado multigraph, in Proposition \ref{prop:multiprop} below.) The Rado graph continues to attract the attention of today's permutation group-theorists; it is known that its automorphism group is simple (in the group-theoretic sense), and satisfies the \emph{strong small-index property}. In \cite{C}, Cameron provides a recent survey of such matters.  Beyond this, the Rado graph satisfies several subtler properties, notably \emph{rank-1 supersimplicity} and \emph{1-basedness}, which make it a central object of study for today's model theorists. Wagner provides an authoritative account in \cite{Wag}.

In more recent years, however, network science has grown beyond the Erd\H{o}s-R\'enyi approach, to embrace alternative methods for modelling real-world networks. The most prominent of these is perhaps the \emph{preferential attachment} (PA) mechanism introduced by Barab\'asi and Albert in \cite{BA}. Another notable class of models derive from the \emph{web-copying} mechanism introduced by Adler and Mitzenmacher in \cite{AM}.

In PA models, a new node is introduced at each time step, and then connected to each pre-existing node with a probability depending on the current degree of that node, according to a \emph{rich-get-richer} paradigm. PA processes can exhibit several properties observed in real-world networks (but absent in Erd\H{o}s-R\'enyi graphs), notably \emph{scale-freeness} meaning that the proportion of nodes of degree $k$ is asymptotically proportional to $k^{-\gamma}$ for some fixed $\gamma$ and all $k$.

What can we say of the infinite limits of these processes? Results of Bonato and Janssen \cite{BJ} have made significant progress for web-copying models. Less work has been done in the case of PA processes. The work of Oliveira and Spencer \cite{OS} studying the \emph{Growing Network} model of  Krapivsky and Redner \cite{KR} and of Drinea, Enachescu, and Mitzenmacher \cite{DEM} is a notable exception. Of greatest relevance to the current paper, however, is the work of Kleinberg and Kleinberg \cite{KK}. There the following PA process is considered: at each time-step, a single node and a constant number $C$ of edges are added. The new edges all emanate from the new node, with their end-points independently chosen among the pre-existing nodes, with probabilities proportional to their degrees. The resulting structures are analysed as \emph{directed multigraphs}: all edges are directed, two or more may share the same start and end-points.

Kleinberg and Kleinberg prove that if $C=1$ or $C=2$, then there exists an infinite structure, to which, with probability 1, the infinite limit of the process will be isomorphic. However, the analogous result fails for $C \geq 3$: given two instantiations of the process, there is a positive probability that their infinite limits will fail to be isomorphic.

In this paper we extend the results and methods of \cite{KK}, by considering a process which adds $f(t)$ many edges at stage $t$ for some function $f:\textbf{N} \to \textbf{N}$. Again the start-point of every edge is the new node, and the end-points are chosen independently with probability proportional to the nodes' degrees. It follows easily from the results of \cite{KK} that whenever $f$ is non-constant, or constant with value $\geq 3$, there is a positive probability that the infinite limiting structures of two instantiations will be non-isomorphic as directed multigraphs. However, by forgetting the directions of edges, and looking for isomorphisms as multigraphs, we are able to recover a new categoricity result. In Theorem \ref{thm:mainmulti} we rigorously establish a sufficient criterion for the resulting structure to be isomorphic to the \emph{Rado multigraph} with probability 1. (This structure is the natural multigraph analogue of the Rado graph, and is defined in Definition \ref{defin:axioms} below.) Our criterion is that $f$ is asymptotically bounded above and below by positive non-constant linear functions of $t$.

In \cite{Elw}, the author uses similar machinery to analyse a Preferential Attachment process in which parallel edges are not permitted, and the new node $t+1$ is connected to each pre-existing node $u$ independently with probability $\frac{d_u(t)}{t}$. Thus the number of new edges is not prescribed, but is itself a random variable. It is shown in \cite{Elw} that, so long as the initial graph is neither edgeless nor complete, with probability 1 the infinite limit of the process will be the Rado graph augmented with a finite number of either universal or isolated nodes.

We describe the structure of the remainder of the paper: 

\begin{itemize}
\item In section \ref{section:axioms} we introduce the infinite \emph{Rado Multigraph}.
\item In section \ref{section:PA} we introduce $\textrm{MPA}_f$, our main variant of the preferential attachment process, as well as a secondary variant $\textrm{GPA}_f$. We describe suitable hypotheses on the function $f$, and we prove some initial results. We state our main result, Theorem \ref{thm:mainmulti}, which asserts that under appropriate conditions $\textrm{MPA}_f$ approaches the Rado multigraph.
\item In section \ref{sec:martingale} we develop the theory of martingales for the process $\textrm{MPA}_f$, our main probabilistic tool.
\item In section \ref{sec:finalproof}, we complete the proof of Theorem \ref{thm:mainmulti}. 
\item In section \ref{sec:furtherwork}, we close with some discussion of possible further directions of study.

\end{itemize}

\section{The Rado Multigraph} \label{section:axioms}

We begin by defining the infinite structure which our finitary processes will be shown to approach. So far as we are aware, this structure has not previously appeared in the literature. However the reader familiar with the Rado graph will find little of surprise. (For clarity, we work with the convention that $0 \in \Nat$.)

\begin{defin} \label{defin:axioms}
A \emph{finitary loopless multigraph} is a structure $(V,E)$ where $V$ is a set of vertices, and $E$ is a finitary multiset of unordered pairs from $V$. That is to say every element of $E$ is of the form $e = \{v_i, v_j\}$ (written $v_i v_j$) where $v_i, v_j \in V$ are distinct, $E$ is itself unordered, and each $e$ has a multiplicity $m_e \in \Nat$ describing the number of occurrences of $e$ within $E$. (If $e$ does not occur within $E$ we consider it to have multiplicity $0$.)

\emph{The Rado Multigraph} is a finitary loopless multigraph where $V$ is countably infinite and which additionally satisfies the following axiom:  

\noindent $\bullet$ For any $n \geq 1$, any $m_1, \ldots, m_n \in \Nat$, and any distinct $u_1, \ldots, u_n \in V$ there exists $v \in V$ such that the multiplicity of $v u_i$ is exactly $m_i$.
\end{defin} 
 
We now show that the Rado Multigraph is unique up to isomorphism. We also take the opportunity to observe that several other familiar properties of the Rado graph  hold in our context although we shall not use them directly:

\begin{prop} \label{prop:multiprop}
Let $\mathcal{M}$ and $\mathcal{M}'$ be structures satisfying Definition \ref{defin:axioms} of the Rado Multigraph. Then the following hold:
\begin{enumerate}
\item $\aleph_0$\emph{-categoricity:} $\mathcal{M} \cong \mathcal{M}'$.
\item \emph{1-transitivity:} Given vertices $v_1, v_2$ in $\mathcal{M}$ there exists some $\alpha \in \mathrm{Aut}(\mathcal{M})$ where $\alpha(v_1)=v_2$. 
\item \emph{Ultrahomogeneity:} If $A,B$ are finite substructures of $\mathcal{M}$ and $\gamma:A \cong B$ is a multigraph-isomorphism, then there exists $\alpha \in \mathrm{Aut}(\mathcal{M})$ where $\alpha\upharpoonright_A =\gamma$.

\noindent (Note: here we treat $A$ and $B$ as \emph{induced substructures}: for any vertices $u,v \in A$ the multiplicity of $uv$ within $A$ equals that within $\mathcal{M}$).
\item \emph{Universality:} Any finite or countably infinite finitary loopless multigraph can be isomorphically embedded in $\mathcal{M}$.

\end{enumerate}
\end{prop}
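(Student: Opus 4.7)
The plan is to prove all four properties by a single back-and-forth construction, following the classical argument for the Rado graph. The engine is a one-point \emph{extension lemma}: given an isomorphism $f: A \to B$ between finite induced substructures with $A \subset \mathcal{M}$ and $B \subset \mathcal{M}'$, and given any $a \in \mathcal{M} \setminus A$, one may extend $f$ to a partial isomorphism $f^+$ defined on $A \cup \{a\}$. To prove this, for each $a' \in A$ axioms (A0)--(A3) determine a unique natural number $m_{a'}$ with $\mathcal{M} \models E_{m_{a'}}(a', a) \wedge \neg E_{m_{a'}+1}(a', a)$. Applying (A4) in $\mathcal{M}'$ to the distinct nodes $f(a') \in B$ with these multiplicities $m_{a'}$ yields a witness $b \in \mathcal{M}'$, and setting $f^+(a) := b$ produces the desired extension.

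With the extension lemma in hand, fix enumerations $\mathcal{M} = \{a_0, a_1, \ldots\}$ and $\mathcal{M}' = \{b_0, b_1, \ldots\}$ and construct a chain $f_0 \subset f_1 \subset \ldots$ of finite partial isomorphisms, starting from $f_0 = \emptyset$. On even steps, apply the lemma so as to adjoin the least-indexed $a_n$ not yet in the domain; on odd steps, apply the symmetric version to adjoin the least-indexed $b_n$ not yet in the image. Since each $E_j$ is a binary relation and every pair of elements of $\mathcal{M}$ eventually lies in the domain of some $f_n$, the union $f_\infty$ is an isomorphism $\mathcal{M} \to \mathcal{M}'$. This establishes (1), $\aleph_0$-categoricity.

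Property (3), ultrahomogeneity, follows by running exactly the same construction inside $\mathcal{M}$ itself (i.e.\ with $\mathcal{M}' = \mathcal{M}$), this time initialising $f_0$ to be the hypothesised isomorphism $A \to B$. Property (2), 1-transitivity, is the special case of (3) with $|A| = |B| = 1$, where the map $\{v_1\} \to \{v_2\}$ is vacuously an isomorphism by (A1). For (4), universality, take any finite or countable finitary loopless multigraph $\mathcal{N}$ and perform only the ``forth'' half of the construction: at each step extend the partial embedding $\mathcal{N} \to \mathcal{M}$ by applying the extension lemma. Here the multiplicities $m_{a'}$ read off from $\mathcal{N}$ are well-defined precisely because $\mathcal{N}$ is finitary.

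I anticipate no serious obstacle: the argument is pure Fra\"{\i}ss\'e-style bookkeeping. The main point to check is that the extension lemma legitimately applies at each stage, which amounts to verifying that the partial map always has finite domain (so that (A4) is invoked with only finitely many parameters) and that each required multiplicity is a genuine natural number (guaranteed by (A3) in cases (1)--(3) and by the finitary assumption on $\mathcal{N}$ in case (4)).
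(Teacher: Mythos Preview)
Your proposal is correct and follows essentially the same approach as the paper: a back-and-forth argument powered by (A3) to read off finite multiplicities and (A4) to supply witnesses. The paper carries out only part (1) in detail and leaves (2)--(4) as ``minor alterations'' for the reader, whereas you spell out explicitly how the same engine yields ultrahomogeneity (seed the construction with the given finite isomorphism), 1-transitivity (as a special case), and universality (forth only); in that sense your write-up is slightly more complete than the original.
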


\begin{proof}
We concentrate on proving statement 1. (Statements 2-4 follow from minor alterations to our argument. We leave the reader to fill in the details.) We proceed by a standard back-and-forth argument. First we list the elements of $\mathcal{M}$ as $a_1,a_2,a_3,\ldots$ and similarly $b_1, b_2, b_3, \ldots$ for $\mathcal{M}'$. Now we argue inductively. Suppose $i$ is even, and suppose $(a'_1, \ldots, a'_i ) \cong (b'_1, \ldots, b'_i)$ have been chosen. Let $a'_{i+1}=a_j$ where $j$ is minimum such that $a_j \not\in \{a'_1, \ldots, a'_i \}$.

Let $(m_1, \ldots, m_i)$ be the vector counting the edges between $a'_{i+1}$ and $(a'_1, \ldots, a'_i )$. Notice that each $m_j \in {\bf N}$ by the assumption of finitariness. Then by hypothesis there exists $b'_{i+1}$ joined to $(b'_1, \ldots, b'_i)$ in a fashion described by $(m_1, \ldots, m_i)$. Hence $(a'_1, \ldots, a'_{i+1} ) \cong (b'_1, \ldots, b'_{i+1})$.

Odd steps are identical, exchanging the roles of $\mathcal{M}$ and $\mathcal{M}'$. Thus we build an isomorphism $\mathcal{M} \cong \mathcal{M}'$.\end{proof}
 
Our concern in the current work is on PA processes. However, we remark in passing that the Rado multigraph also arises from the following process in the style of Erd\H{o}s-R\'enyi. We shall not use this result and leave the proof as an easy adaptation of the corresponding classical result about the Rado graph.

\begin{prop}
Let $(p_j)_{j \geq 1}$ be any sequence lying entirely in $(0,1)$. Let $V$ be a countably infinite set. Let $\mathcal{M}$ be multigraph arising from the following random process.

\begin{itemize}
\item At step 0, the structure has no edges.
\item At step $j \geq 1$, consider every pair of distinct $v_1,v_2 \in V$ where $v_iv_j$ currently has multiplicity $j-1$, and connect $v_1,v_2$ with a new $j$\textsuperscript{th} edge with probability $p_{j}$, independently of the behaviour of all other vertices.
\end{itemize}

Then with probability $1$, $\mathcal{M}$ is isomorphic to the Rado multigraph.
 \end{prop}

\section{Preferential Attachment with Prescribed Edge Growth} \label{section:PA}

In this section we shall describe two variants of the preferential attachment process, establish some of their basic properties, and formally state our main result. The process of principal interest will be $\textrm{MPA}_f$ which builds a directed multigraph. We will also mention a natural variant $\textrm{GPA}_f$, which builds a directed graph. Each proceeds by adding, at each time step, a single node along with a prescribed number of directed edges emanating from it. The number of these edges is determined by some fixed function $f:\textbf{N} \to \textbf{N}$. (In fact the directions of the edges will play no role in the theory: we shall analyse the resulting structures as undirected (multi)graphs. However in the interim it will be convenient to refer to the `start-' and `end-points' of each edge, so we preserve directedness for the time being.) We shall work over some initial directed (multi)graph $G'$ containing no isolated nodes (i.e. nodes of degree $0$). However our results will be independent of the choice of $G'$, so the reader may choose to focus on the case where $G'$ is trivial. 

\begin{defin}[The process $\textrm{MPA}_f$] \label{defin:MPA}
Let $G'= (V',E')$ be a finite directed multigraph containing no isolated nodes (so $E'$ is a multiset of ordered pairs from $V'$). Suppose that $G'$ contains $|E'|=e'$ edges and $|V'|=v'$ nodes. We will assume $V'=\{1,\ldots, v'\}$. 

Suppose that the function $f:\textbf{N} \to \textbf{N}$ satisfies:
\begin{itemize}
\item $f(0)=e'$.
\item $f(t)=0$ whenever $1 \leq t \leq v'-1$.
\item $f(t) \geq 1$ for all $t \geq v'$.
\end{itemize}

At each time-step $t \geq 1$, we shall construct a multigraph $G(t)$ with vertex set $V(t)$ and edge multiset $E(t)$.

First we impose $G(1) = \ldots  = G(v') = G'$.

Whenever $t \geq v'$, we will have $V(t):=\{1,\ldots,t\}$ and
$$E(t+1) =E(t) \cup \mathcal{E}(t+1)$$ where $|\mathcal{E} (t+1)| = f(t)$.

The start-point of each edge in $\mathcal{E} (t+1)$ is the new node $t+1$. The end-points are chosen independently from $V(t)$, with probabilities directly proportional to their degrees in $G(t)$. 
\end{defin}

Notice that, the degrees used to calculate the probabilities are taken from $G(t)$, which is to say the model does not notice any incremental updating of degrees between $G(t)$ and $G(t+1)$. One can imagine the endpoints of the $f(t)$ many new edges being selected simultaneously, and independently of each other.

Notice too that our assumption that $f(t) \neq 0$ for $t \geq v'$ (along with our assumption on $G'$) serves to ensure that there are never any isolated nodes.

We may now state our main result. (Recall the asymptotic notation $g_1 = \Theta (g_2)$ for functions $g_1, g_2$ as meaning that there exist $c_2 \geq c_1>0$ so that for all large enough $t$ we have $c_1 \cdot g_2(t) \leq g_1(t) \leq c_2 \cdot g_2(t)$.)

\begin{restatable}{theorem}{mainmulti}  \label{thm:mainmulti}
Suppose that $G'$ is a finite directed multigraph containing no isolated nodes, that $f$ satisfies the requirements from Definition \ref{defin:MPA}, and also that $f(t) = \Theta(t)$. Then, with probability 1, the infinite limit of $\MPA(G')$ is isomorphic, as an undirected multigraph, to the Rado multigraph.
\end{restatable}

Before we commence the proof of this theorem, we remark that we expect that a similar result to apply to a graph variant of the process, which we briefly introduce:

\begin{defin}[The process $\textrm{GPA}_f$] \label{defin:GPA}
Let $G'= (V',E')$ be a finite directed graph containing no isolated nodes. Suppose that $G'$ contains $|E'|=e'$ edges and $|V'|=v'$ nodes. We will assume $V'=\{1,\ldots, v'\}$. 

Suppose that the function $f:\textbf{N} \to \textbf{N}$ satisfies:
\begin{itemize}
\item $f(0)=e'$.
\item $f(t)=0$ whenever $1 \leq t \leq v'-1$.
\item $1 \leq f(t) \leq t$ for all $t \geq v'$.
\end{itemize}

At each time-step $t \geq 1$, we shall construct a graph $G(t)$ with vertex set $V(t)$ and edge set $E(t)$.

First we impose $G(1) = \ldots  = G(v') = G'$.

Whenever $t \geq v'$, we will have $V(t):=\{1,\ldots,t\}$ and
$$E(t+1) =E(t) \cup \mathcal{E}(t+1)$$ where $|\mathcal{E} (t+1)| = f(t)$.

The start-point of each edge in $\mathcal{E} (t+1)$ is the new node $t+1$. The end-points of the edges are selected sequentially from $V(t)$, without replacement, with the choice at each step made from the remaining unselected elements of $V(t)$ with probabilities directly proportional to their degrees in $G(t)$. 
\end{defin}

\begin{restatable}{conj}{mainconj} \label{conj:mainconj}
Suppose that $G'= (V',E')$ be a finite directed graph containing no isolated nodes, that $f$ satisfies the conditions in Definition \ref{defin:GPA}, and further that there are constants $0 < c_1 \leq c_2 <1$ where $c_1 \cdot t \leq f(t) \leq c_2 \cdot t$ for all large enough $t$. Then, with probability 1, the infinite limit of $\GPA(G')$ is isomorphic as an undirected graph to the Rado graph.
\end{restatable}

Our arguments will be independent of $G'$, and thus we shall largely suppress mention of it.  Let us now consider the distribution of edges at stage $t+1$. First notice that $|E (t)| = F(t):=\sum_{i=0}^{t-1} f(i)$. Hence in $\textrm{MPA}_f$, at stage $t+1$ given any pre-existing node $u \leq t$, the probability that any given edge in $\mathcal{E} (t+1)$ has its end-point at $u$ is exactly $\frac{d_u(t)}{2F(t)}$, where $d_u(t)$ is the degree of $u$ in $G(t)$. Thus the expected number of edges in $\mathcal{E}(t+1)$ with endpoint at $u$ is $\frac{f(t) \cdot d_u(t)}{2F(t)}$.

In $\textrm{GPA}_f$ this probability distribution is more complicated, and the expected number of edges $u$ receives at stage $t+1$ depends in a more detailed way upon $G(t)$. This is the primary obstacle to extending the current work to a proof of Conjecture \ref{conj:mainconj}.

Our standing assumption will be that we are working in $\MPA$. We shall leave the case of $\GPA$ open, but make some remarks about it as we proceed.

Our assumption in Theorem \ref{thm:mainmulti} is that $f(t)= \Theta(t)$. However we shall be able to develop much of the theory under the following weaker hypotheses:

\begin{assumption} \label{assumption:ass}
\begin{equation}  \sum_{s=0}^\infty \frac{f(s)}{F(s)} = \infty. \end{equation}
\begin{equation}  \sum_{s=0}^\infty \left( \frac{f(s)}{F(s)} \right)^2 < \infty.\end{equation}
\end{assumption}

We briefly discuss this. Assumption \ref{assumption:ass} easily follows in full, for instance, if $f(t)= \Theta(t^{\alpha})$ for some $\alpha \geq 0$.

However part (2) fails in general for polynomially bounded functions, an example being: 
$$f(t) = \left\{ \begin{array}{ll} t & \textrm{ when } t=2^n \textrm{ for } n \in \Nat\\
1 & \textrm{ otherwise.} \end{array} \right. $$ 

On the other hand, both parts do hold for some exponential functions, such as $f(t)=\lfloor \frac{1}{4} t^{- \frac{3}{4}} e^{\frac{1}{4}t} \rfloor$.



In all cases, it will be useful to extend the domain of $f$ to ${\bf R}^{\geq 0}$. We choose to do this as a step function, via $f(t):=f \left( \lfloor t \rfloor \right)$. (Of course there may be more natural ways to achieve the same thing, however this choice will be convenient, as the fourth point in the following Lemma makes clear.) We now gather together some observations about the extended function $f$. These follow immediately from our previous conditions.

\begin{coro} 
The following hold:
\begin{itemize}
\item $f(t)=e'$ for $0 \leq t <1$.
\item $f(t)=0$ whenever $1 \leq t < v'$.
\item $f(t) \geq 1$ for all $t \geq v'$.
\item $f$ is Lebesgue-measurable with antiderivative $\ds \int_{0}^{t} f(s) ds =:F(t)$. (This notation is consistent with the previous interpretation of $F$ since the two functions coincide at integer points.)
\item $F$ is monotonic increasing everywhere and strictly so for $t \geq v'$.
\end{itemize}
\end{coro}

Under our additional hypothesis we can say a little more:

\begin{lem} \label{lem:integral}

Suppose that Assumption \ref{assumption:ass}(2) holds. Then for any $\beta \geq 1$, there exists $K_\beta >0$ so that for any $t \geq m \geq 0$:
$$\left| \int_m^t \frac{f(s)}{F(s)^\beta} ds - \sum_{s=m}^{t} \frac{f(s)}{F(s)^\beta} \right| < K_\beta.$$
\end{lem}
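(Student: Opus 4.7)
The plan is to reduce to the case $m, t \in \Nat$ (partial integrals over $[\lfloor m \rfloor, m]$ or $[t, \lceil t \rceil]$ contribute only a bounded error that can be absorbed into $K_\beta$), and then to compare the sum and the integral interval by interval. Because $f$ is a step function with $f(s) = f(n)$ on $[n, n+1)$, and $F$ is linear there with slope $f(n)$, the substitution $u = F(s)$ gives the clean identity
$$\int_n^{n+1} \frac{f(s)}{F(s)^\beta}\, ds \ = \ \int_{F(n)}^{F(n+1)} u^{-\beta}\, du,$$
which evaluates explicitly to $\ln(1 + f(n)/F(n))$ when $\beta = 1$ and to $\frac{1}{\beta - 1}\bigl(F(n)^{1-\beta} - F(n+1)^{1-\beta}\bigr)$ when $\beta > 1$. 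Monotonicity of $u^{-\beta}$ makes the per-interval discrepancy $\frac{f(n)}{F(n)^\beta} - \int_n^{n+1} \frac{f(s)}{F(s)^\beta}\, ds$ automatically nonnegative.

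The crux is to bound each per-interval discrepancy by $C_\beta (f(n)/F(n))^2$ for a constant $C_\beta$ depending only on $\beta$. Setting $x := f(n)/F(n)$ and Taylor-expanding the formulas above around $x = 0$, the discrepancy equals $\frac{\beta}{2} \cdot f(n)^2/F(n)^{\beta + 1} + O(x^3)/F(n)^\beta$. Assumption \ref{assumption:ass}(2) forces $x \to 0$ and $F(n) \to \infty$, so both the higher-order corrections and any factor $F(n)^{-(\beta - 1)}$ absorb uniformly into $C_\beta$; the finitely many initial indices $n < v'$, where $f(n) = 0$ or $F(n)$ is small, contribute only a bounded total. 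Summing for $n = m, \ldots, t - 1$ therefore bounds the bulk discrepancy by $C_\beta \sum_n (f(n)/F(n))^2$, which is finite precisely by Assumption \ref{assumption:ass}(2). The only remaining terms are an ``off-by-one'' boundary summand $\frac{f(t)}{F(t)^\beta}$ at the upper endpoint (bounded since $f(t)/F(t) \to 0$ and $F(t)^{1 - \beta}$ stays bounded for $\beta \geq 1$) plus the partial-interval corrections, all folded into $K_\beta$.

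The principal technical obstacle is establishing the \emph{quadratic} (rather than merely linear) dependence on $f(n)/F(n)$. A naive monotonicity bound yields only the first-order gap $\frac{f(n)}{F(n)^\beta} - \frac{f(n)}{F(n+1)^\beta}$, and under hypothesis \ref{assumption:ass}(1) alone the corresponding sum would diverge. It is the Taylor cancellation in the explicit formulas---essentially the observation that the difference between a left Riemann sum and the integral of a smooth monotonic function over unit intervals is second-order in the variation across each interval---that supplies the extra order and delivers summability via Assumption \ref{assumption:ass}(2).
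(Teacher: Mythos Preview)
Your argument is correct and takes a somewhat different route from the paper's. The paper first sandwiches the integral between the two Riemann sums $\sum f(s)/F(s+1)^\beta$ and $\sum f(s)/F(s)^\beta$, and then bounds the gap between those two sums by expanding $(F(s)+f(s))^\beta$ via Newton's generalised binomial theorem, obtaining a bound of the form $2^\beta \sum (f(s)/F(s))^2$. You instead exploit the step-function structure of $f$ to evaluate $\int_n^{n+1} f/F^\beta$ \emph{exactly} by the substitution $u=F(s)$, and then Taylor-expand the resulting closed form to isolate the quadratic leading term of the per-interval discrepancy directly. Both approaches rest on the same underlying observation---that the error on each unit interval is second order in $f(n)/F(n)$---but your substitution makes this more transparent and sidesteps the paper's appeal to convergence of the generalised binomial series on the boundary circle, at the modest cost of a case split between $\beta=1$ and $\beta>1$.
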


\begin{proof}

Let $M$ be such that whenever $s \geq M$ then $f(s) < F(s)$. Such a value must exist by Assumption \ref{assumption:ass}(2). 

It is enough to prove the result this for all $m \geq M$, since one can then add $\ds \max \left\{ \int_0^M \frac{f(s)}{F(s)^\beta} ds,\  \sum_{s=0}^{M} \frac{f(s)}{F(s)^\beta} \right\}$ to $K_{\beta}$ to obtain the result for all $m$. Thus we shall assume $m \geq M$.

Firstly, it is immediate by consideration of $F\upharpoonright_{[s,s+1]}$ that 
$$\sum_{s=m}^{t} \frac{f(s)}{F(s+1)^\beta} < \int_m^t \frac{f(s)}{F(s)^\beta} ds < \sum_{s=m}^{t} \frac{f(s)}{F(s)^\beta}.$$
Next we shall appeal to Newton's generalised binomial theorem, that whenever $a,b,\beta \in \bf{C}$ with $0<|b|<|a|$, then $\ds (a+b)^{\beta} = \sum_{j=0}^{\infty} C(\beta, j) a^{\beta - j} b^j,$ where $C(\beta,j)$ are the generalised binomial coefficients.

When $a=1$, the series has radius of convergence $1$ in $b$. We shall also use the fact that the series remains convergent for $|b|=1$, so long as $\textrm{Re}(\beta)>0$, which of course holds in the context of this Lemma. (See \cite{CKP} p.17, for example.) Now,

\begin{align*}
\sum_{s=m}^{t} \frac{f(s)}{F(s)^\beta} &- \sum_{s=m}^{t} \frac{f(s)}{F(s+1)^\beta} = 
\sum_{s=m}^{t} \frac{f(s)}{F(s)^\beta} - \frac{f(s)}{\left(F(s)+ f(s)\right)^\beta} \\
& < \sum_{s=m}^{t} \frac{f(s)\left(F(s)+ f(s)\right)^\beta - f(s) F(s)^\beta}{F(s)^{2\beta}}\\
& = \sum_{s=m}^{t} \frac{f(s)\left(\sum_{j=1}^{\infty} C(\beta,j) F(s)^{\beta - j}f(s)^j \right)}{F(s)^{2\beta}}\\
& < \sum_{s=m}^{t} \frac{\sum_{j=1}^{\infty} C(\beta,j) F(s)^{\beta - 1}f(s)^2}{F(s)^{2 \beta}}\\
& < \sum_{s=m}^{t} \frac{2^\beta f(s)^2}{F(s)^{1 + \beta}} \ \leq  \ 2^\beta \sum_{s=m}^{t} \frac{ f(s)^2}{F(s)^{2}} < 2^\beta \cdot K := K_\beta
\end{align*}
where $K$ is the finite bound provided in Assumption \ref{assumption:ass} (2).

\end{proof}

The next two results hold in $\GPA$ as well as $\MPA$:

\begin{lem} \label{lem:nonzero}
Suppose that Assumption \ref{assumption:ass}(1) holds. Then for any node $u$, any stage $t_0$, and any state of the graph $G(t_0)$, the probability that $v$ never receives another edge is $0$.
\end{lem}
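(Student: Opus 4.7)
My approach would be to bound, for each $t \geq t_0$, the conditional probability that $u$ receives no edge at stage $t+1$ by $\exp(-f(t)/(2F(t)))$, then chain these bounds and invoke Assumption \ref{assumption:ass}(1). First, I would fix $t \geq t_0$ and condition on $G(t)$. Since $u$ is present by stage $t_0$ and the hypotheses on $G'$ and on $f$ prevent any node from becoming isolated, we have $d_u(t) \geq 1$. In $\MPA$ the $f(t)$ endpoints constituting $\mathcal{E}(t+1)$ are chosen independently, each landing on $u$ with probability $d_u(t)/(2F(t))$, so
\[
\Prob(u \text{ gets no edge at stage } t+1 \mid G(t)) = \left(1 - \frac{d_u(t)}{2F(t)}\right)^{f(t)} \leq \exp\!\left(-\frac{f(t)}{2F(t)}\right),
\]
using $1-x \leq e^{-x}$ and $d_u(t) \geq 1$. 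For $\GPA$, the without-replacement sampling can only increase the chance that $u$ is hit, so the same exponential bound holds; I would dispatch this in a one-line comparison argument.

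Next, writing $A_t$ for the event that $u$ receives no edge at stage $t+1$, I would chain the per-stage bound via the tower property: since $\Prob(A_T \mid G(T)) \leq \exp(-f(T)/(2F(T)))$ almost surely,
\[
\Prob\!\left(\bigcap_{t=t_0}^{T} A_t\right) \leq \exp\!\left(-\tfrac{1}{2}\sum_{t=t_0}^{T} \frac{f(t)}{F(t)}\right).
\]
Letting $T \to \infty$ and invoking Assumption \ref{assumption:ass}(1), the right-hand side tends to $0$, and continuity from above then yields $\Prob(u \text{ never receives another edge}) = 0$.

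The only real subtlety is making the per-stage bound uniform in the random state $G(t)$, which is handled by the crude estimate $d_u(t) \geq 1$; all the actual work is done by the divergence in Assumption \ref{assumption:ass}(1). I do not expect any further obstacle, since the $\GPA$ case is strictly easier from the perspective of this lemma (without-replacement only helps $u$), even though it is where the deeper analysis of the paper breaks down.
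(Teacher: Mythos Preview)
Your proposal is correct and follows essentially the same route as the paper: bound the per-stage survival probability using $d_u(t)\geq 1$, take the product over stages, and invoke Assumption~\ref{assumption:ass}(1) to show it vanishes. The only cosmetic difference is that the paper takes logarithms and uses $\ln(1+x)>\tfrac{1}{2}x$ where you use $1-x\leq e^{-x}$ directly, and the paper's handling of $\GPA$ is the same one-line remark you give.
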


\begin{proof}

Suppose that $d_u(t_0)=N \geq 1$. The probability that $u$ never receives a further edge is therefore given by (or in $\GPA$ is bounded above by) $$\prod_{t=t_0}^{\infty} \left(1 - \frac{N}{2F(t)} \right)^{f(t)}.$$

We shall show that this is $0$. It is clearly enough to do so in the case $N=1$. Taking logarithms, it is therefore enough to show that
$$\sum_{t=t_0}^{\infty} f(t) \ln \left(1 + \frac{1}{2F(t) - 1}  \right)$$ diverges to $\infty$.
Now as for small enough $x$, we know $\ln(1+x)> \frac{1}{2}x$. Thus for large enough $t$, 
$$\ln \left(1 + \frac{1}{2F(t) - 1} \right) > \frac{1}{4 F(t)}.$$
Thus the result follows from Assumption \ref{assumption:ass}(1).
\end{proof}

\begin{coro} \label{coro:dinf}
Suppose that Assumption \ref{assumption:ass}(1) holds. Then for any node $u$, given any state of the graph $G(t_0)$, with probability $1$ it will be true that $d(t) \to \infty$ as $t \to \infty$.
\end{coro}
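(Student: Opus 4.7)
The plan is to leverage Lemma \ref{lem:nonzero} iteratively. Observe first that in $\MPA$ (and $\GPA$) the degree $d_u(t)$ is a monotonic non-decreasing function of $t$, since edges are only added and never removed. Consequently, the event $\{d_u(t) \to \infty\}$ coincides with $\bigcap_{N \in \Nat} A_N$, where $A_N := \{d_u(t) \geq N \text{ for some } t\}$. By countable additivity, it suffices to show $\Prob(A_N) = 1$ for every $N \in \Nat$.

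I would proceed by induction on $N$. For notational convenience, fix $t_0$ and the state of $G(t_0)$, and set $d_0 := d_u(t_0)$. Define the stopping times
$$T_k := \min\{t \geq t_0 : d_u(t) \geq d_0 + k\}, \quad k \geq 1,$$
with the convention $T_k = \infty$ if $u$ receives fewer than $k$ further edges. The base case, $\Prob(T_1 < \infty) = 1$, is precisely Lemma \ref{lem:nonzero}. For the inductive step, suppose $\Prob(T_k < \infty) = 1$. On the event $\{T_k < \infty\}$, the configuration $G(T_k)$ is a well-defined finite multigraph in which $u$ has degree at least $d_0 + k \geq 1$; applying Lemma \ref{lem:nonzero} to this configuration (with $T_k$ playing the role of $t_0$) gives that, conditional on $G(T_k)$, the probability that $u$ receives no further edge is $0$. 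Since this holds for every possible value of $G(T_k)$, integrating over the distribution of $G(T_k)$ yields $\Prob(T_{k+1} < \infty \mid T_k < \infty) = 1$, and hence $\Prob(T_{k+1} < \infty) = 1$.

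Thus $\Prob(T_k < \infty) = 1$ for all $k$, which gives $\Prob(A_{d_0 + k}) = 1$ for all $k$. Taking the countable intersection over $k$ completes the proof.

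There is no real obstacle here; the only point requiring modest care is the inductive step, where we invoke Lemma \ref{lem:nonzero} at the random time $T_k$. This is legitimate because Lemma \ref{lem:nonzero} is stated uniformly over all starting configurations, which functions as a strong Markov property for the process and justifies conditioning on $G(T_k)$.
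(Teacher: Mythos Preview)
Your argument is correct and is essentially a fleshed-out version of the paper's own proof, which simply states that the result ``follows automatically from Lemma~\ref{lem:nonzero} by the countable additivity of the probability measure.'' Your inductive use of stopping times and the remark about applying Lemma~\ref{lem:nonzero} at the random time $T_k$ make explicit precisely what the paper leaves implicit.
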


\begin{proof}
This follows automatically from Lemma \ref{lem:nonzero} by the countable additivity of the probability measure.
\end{proof}

\section{Martingale Theory}\label{sec:martingale}

In this section, we apply some machinery from the theory of martingales to the process $\MPA$, generalising the theory developed in \cite{KK}. We shall assume throughout that we are working in $\MPA$, and begin with the following easy result, which does not transfer immediately to $\GPA$.

\begin{rem} \label{rem:probbound}
Given any node $u$, define $U_u(t+1):= d_u(t+1) - d_u(t)$ and
$\mu_u(t):= \E \left( U_u(t+1) \big| \big| d_u(t) \right).$ Then $$\frac{\mu_u(t)}{d_u(t)} = \frac{f(t)}{2F(t)}.$$
In particular, if $f(t) = \Theta \left( t^{\alpha} \right)$ where $\alpha \geq 0 $ then $\ds \mu_u(t) = \Theta \left( \frac{d_u(t)}{t} \right).$ 
\end{rem}

The next two results are the key to our analysis, and generalise Proposition 3.1 of \cite{KK}

\begin{prop} \label{prop:xuexists}

Suppose that Assumption \ref{assumption:ass} (2) holds. For any node $u$, define $$A(t) = A_u(t):=\prod_{j=1}^{t-1} \left( 1 + \frac{f(j)}{2F(j)}\right)$$ and
$X(t):= X_u(t) = \frac{d_u(t)}{A_u(t)}$. Then 

\begin{enumerate}[(i)]
\item $X(t)$ is a martingale.
\item Thus, for any node $u$, with probability 1, there exists $x_u \geq 0$ such that $$\lim_{t \to \infty} \frac{d_u(t)}{A(t)} = x_u.$$
\item $A(t) = \Theta \left( F(t)^{\frac{1}{2}} \right)$
\end{enumerate}
\end{prop}

\begin{proof}
Employing Remark \ref{rem:probbound}, the first part is straightforward:
\begin{eqnarray*} \E (X(t+1) || X(t)) &=& \frac{1}{A(t+1)} \E (d(t+1)||d(t))\\
&=& \frac{1}{A(t+1)}\left( d(t) +\mu(t) \right)\\
&=& \frac{1}{A(t+1)}d(t) \left( 1 +\frac{\mu(t)}{d(t)} \right)\\
&=&  \frac{1}{A(t)} d(t) = X(t) .\end{eqnarray*}

Part (ii) follows from (i) via Doob's convergence theorem, which gives us that $X(t) \to X$ for some random variable $X$.

Hence all that remains to understand the behaviour of $A(t)$ for large $t$, to establish (iii). By taking logarithms and employing the standard bounds $x- \frac{1}{2}x^2 < \ln(1+x)<x$, we see:
$$\frac{1}{2} \sum_{s=1}^{t-1} \frac{f(s)}{F(s)} - \frac{1}{8} \sum_{s=1}^{t-1} \frac{f(s)^2}{F(s)^2} < \ln A(t)< \frac{1}{2} \sum_{s=1}^{t-1} \frac{f(s)}{F(s)}.$$
Therefore by Assumption \ref{assumption:ass} and Lemma \ref{lem:integral}, it follows that
$$\frac{1}{2} \int_{s=1}^{t-1} \frac{f(s)}{F(s)}ds - K < \ln A(t)<\frac{1}{2} \int_{s=1}^{t-1} \frac{f(s)}{F(s)}ds + K'$$
for some constants $K$ and $K'$ from which the result follows.
\end{proof}

We need a little more information about the distribution of the $x_u$ provided by the preceding result:

\begin{prop} \label{prop:poslimit}
Suppose that $f$ satisfies Assumption \ref{assumption:ass} in full. Given any time $t_0$, state $G_0(t_0)$, and node $u$, $\Prob \left(x_u>0 \right) = 1$.
\end{prop}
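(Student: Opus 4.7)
The plan is to prove $\Prob(x_u>0)=1$ by combining $L^2$-boundedness of the martingale $X_u(t)$ with a Chebyshev-type estimate on the conditional law of $x_u$ given $\mathcal{F}_t$, and then pushing $t \to \infty$.

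For the $L^2$-bound, I would compute the conditional variance of $X_u(t+1)-X_u(t)$. Since the increment $U_u(t+1) := d_u(t+1)-d_u(t)$ is conditionally binomial with parameters $f(t)$ and $d_u(t)/(2F(t))$, its conditional variance is at most $\mu_u(t) = f(t) d_u(t)/(2F(t))$, and so
\[
\V[X_u(t+1)-X_u(t) \mid \mathcal{F}_t] \leq \frac{f(t) d_u(t)}{2 F(t) A_u(t+1)^2}.
\]
Taking expectations, using $\E[d_u(t)] = X_u(t_0) A_u(t)$ (the martingale property from Proposition \ref{prop:xuexists}) and $A_u(t) = \Theta(F(t)^{1/2})$, the sum of unconditional variances is bounded by a constant multiple of $\sum_t f(t)/F(t)^{3/2}$. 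The antiderivative of $f(s)/F(s)^{3/2}$ is $-2/\sqrt{F(s)}$, which converges, so Lemma \ref{lem:integral} applied with $\beta=3/2$ gives finiteness. Hence $X_u(t) \to x_u$ in $L^2$ and $\E[x_u] = X_u(t_0) > 0$.

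Applying the same computation from any later time $t$ will bound the conditional variance as $\V[x_u \mid \mathcal{F}_t] \leq C \, X_u(t)\, \epsilon_t$, where $\epsilon_t := \sum_{s \geq t} f(s)/F(s)^{3/2}$ tends to zero as the tail of a convergent series. Chebyshev then yields
\[
\Prob\!\left(x_u \leq \tfrac{1}{2} X_u(t) \,\big|\, \mathcal{F}_t\right) \leq \frac{4 C \epsilon_t}{X_u(t)}.
\]
On the event $\{x_u=0\}$ the left-hand side equals $1$ for every $t$ (since $X_u(t)>0$), so $\Prob(x_u=0)=0$ will follow provided one can show that $\epsilon_t / X_u(t) \to 0$ almost surely.

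I expect this last asymptotic comparison to be the main obstacle. Under the hypothesis $f(t) = \Theta(t)$ of Theorem \ref{thm:mainmulti} it is easy: $\epsilon_t = \Theta(1/t)$ and $X_u(t) = \Theta(d_u(t)/t)$, giving $\epsilon_t/X_u(t) = \Theta(1/d_u(t)) \to 0$ almost surely by Corollary \ref{coro:dinf}. Under Assumption \ref{assumption:ass} alone, the decay rate of $\epsilon_t$ relative to $A_u(t)^{-1} = \Theta(F(t)^{-1/2})$ is not directly controlled by Lemma \ref{lem:integral}, and a sharpened tail estimate on $\epsilon_t$ — or alternatively a separate zero-one argument for the event $\{x_u=0\}$ — will be needed to complete the proof in full generality.
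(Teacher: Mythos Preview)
Your variance computation and the resulting bound $\V[x_u \mid \mathcal{F}_t] \leq C\, X_u(t)\, \epsilon_t$ are exactly what the paper establishes as its Claim~2, and both arguments finish by reducing to $d_u(t) \to \infty$ via Corollary~\ref{coro:dinf}. The packaging differs: the paper applies the Kolmogorov--Doob maximal inequality to the submartingale $\bigl(X(n)-X(m)\bigr)^2$ along a sequence of stopping times $n_0 < n_1 < \cdots$ at which $X$ successively halves, obtaining $\Prob(n_{i+1} < \infty \mid n_i < \infty) = O\bigl(1/d_u(n_i)\bigr)$; you instead condition the limit $x_u$ on $\mathcal{F}_t$ and let the L\'evy martingale $\Prob(x_u = 0 \mid \mathcal{F}_t)$ do the work. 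Your route is marginally slicker once $L^2$-convergence is in hand; the paper's route does not need $L^2$-convergence as an input (it records that separately, as Corollary~\ref{coro:L2}).

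Your stated obstacle in the final paragraph is not real: under Assumption~\ref{assumption:ass} alone one already has $\epsilon_t = O\bigl(F(t)^{-1/2}\bigr)$, by telescoping. From
\[
\frac{1}{\sqrt{F(s)}} - \frac{1}{\sqrt{F(s+1)}} \;\geq\; \frac{f(s)}{2\, F(s+1)^{3/2}}
\]
and $\bigl(F(s+1)/F(s)\bigr)^{3/2} \leq 2$ for all large $s$ (since $f(s)/F(s) \to 0$ by part~(2) of the Assumption), one gets for all large $m$
\[
\epsilon_m \;=\; \sum_{s \geq m} \frac{f(s)}{F(s)^{3/2}} \;\leq\; 2 \sum_{s \geq m} \frac{f(s)}{F(s+1)^{3/2}} \;\leq\; \frac{4}{\sqrt{F(m)}}.
\]
Hence $\epsilon_t / X_u(t) = O\bigl(A_u(t)/(d_u(t)\sqrt{F(t)})\bigr) = O\bigl(1/d_u(t)\bigr) \to 0$ almost surely, exactly as in your $f(t)=\Theta(t)$ case. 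The paper makes the same passage (writing the sum as $O$ of the corresponding integral and then bounding by $K/\sqrt{F(m)}$), implicitly relying on this sharper tail control rather than on Lemma~\ref{lem:integral} as literally stated; so your hedge is unnecessary and the argument closes in full generality.
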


\begin{proof}
Our proof closely follows that of Proposition 3.1 of \cite{KK}.

We take $u$ as fixed and shall suppress mention of it, writing $X(n)$ for $X_u(n)$, etc., throughout.

Given any $n>m>0$ define $\tilde{X}_{m}(n) := \left( X(n) - X(m) \right)^2$. Then for fixed $m$, it is an elementary fact that the sequence $\tilde{X}_{m}(n)$ forms a submartingale. We now proceed via a sequence of claims.\\

\noindent {\bf Claim 1} $$\E \left( \tilde{X}_{m}(n) \big|\big| X(m) \right) = \sum_{t=m}^{n-1} \E \Big( X(t+1)^2 \big|\big| X(m) \Big) - \E \Big( X(t)^2 \big|\big| X(m) \Big).$$
\noindent {\bf Proof of Claim 1 \ \ } 
\begin{eqnarray*}
\E \Big( \tilde{X}_{m}(n) & \big|\big| & X(m) \Big) \\
&=& \E \Big( X(n)^2 - 2X(n) X(m) + X(m)^2 \big|\big| X(m) \Big)\\
& = & \E \Big( X(n)^2  \big|\big| X(m) \Big) - 2 X(m) \E \Big( X(n) \big|\big| X(m) \Big) + X(m)^2\\
& = & \E \Big( X(n)^2 \big|\big| X(m)\Big) - X(m)^2.
\end{eqnarray*}
Unpacking the sum in the statement of the claim gives the same result. \textbf{QED Claim 1}\\

\noindent {\bf Claim 2 \ \ } There exists $K>0$ such that for all large enough $m$ and all $n>m$ $$\E (\tilde{X}_{m}(n) ||X(m)) < X(m) \cdot \frac{K}{F(m)^{\frac{1}{2}}}.$$
\noindent {\bf Proof of Claim 2 \ \ } Recall $U(t+1) := d(t+1) - d(t)$. Now $U(t+1)$ is binomially distributed via $b \left(f(t), \frac{d(t)}{2F(t)}  \right)$ meaning, as already observed, that $\E (U(t+1) || d(t) = d) = \mu(t) = \frac{d \cdot f(t)}{2F(t)}$ and also $\V(U(t+1)||d(t)=d) = \frac{d \cdot f(t)}{2F(t)}\left( 1- \frac{d}{2F(t)}\right)$. Thus, writing $f$ and $F$ for $f(t)$ and $F(t)$ respectively,
\begin{eqnarray*} \label{eqnarray:Z2}
\E \Big( U(t+1)^2 ||d(t)=d \Big) & = & \left(\frac{df}{2F} \right)^2 + \frac{df}{2F}\left(\frac{2F-d}{2F} \right)\\
& < & \frac{fd}{2F} + \frac{f^2d^2}{4F^2}.
\end{eqnarray*}
At the same time,
\begin{align*}
\E & \Big( d(t+1)^2 || d(t)=  d \Big) \\
& = \ \E \Big( \left(U(t+1) +d \right)^2||d(t)=d \Big)\\
& =  \ \E (U(t+1)^2||d(t)=d) + 2d \E (U(t+1) || d(t)=d) + d^2\\
& <  \ \frac{fd}{2F} + \frac{f^2d^2}{4F^2} +2d \cdot \frac{df}{2F} + d^2\\
& =   \ \frac{fd}{2F} + \left(1+ \frac{f}{2F} \right)^2 d^2.
\end{align*}

Recall the definition of the martingale $X(t):= \frac{d(t)}{A(t)}$. Thus
\begin{eqnarray*}
\E \Big( X(t+1)^2 & \ \Big| \Big| & d(t)=d \Big) = \left( \frac{1}{A(t+1)^2} \right) \cdot \E \Big( d(t+1)^2\ \Big| \Big | \ d(t)=d \Big)\\
& < & \frac{1}{A(t+1)^2} \left(\frac{fd}{2F} + \left(1+ \frac{f}{2F} \right)^2 d^2 \right)\\
& = & \frac{f A(t)}{2F \cdot A(t+1)^2} X(t) + \left(1+ \frac{f}{2F} \right)^2 \left( \frac{A(t)}{A(t+1)}\right)^2 X(t)^2\\
& < & \frac{f}{2F \cdot A(t)} X(t) + X(t)^2.
\end{eqnarray*}
Hence, by the law of total expectation,
$$\E (X(t+1)^2|| X(m)) - \E (X(t)^2||X(m)) < \frac{f}{2F \cdot A(t)} X(m).$$
Summing this up over successive terms (and appealing to Claim 1, Proposition \ref{prop:xuexists} (ii) and Lemma \ref{lem:integral}) we get 
\begin{eqnarray*}
\E (\tilde{X}_{m}(n) &||& X(m)) < X(m) \cdot \sum_{t=m}^{n-1} \frac{f}{2F A(t)}\\
& < & X(m) \cdot  \sum_{t=m}^{n-1} \frac{f}{2F A(t)}\\
& = & O \left( X(m) \cdot \sum_{t=m}^{n-1} \frac{f}{F^{\frac{3}{2}}} \right)\\
& = & O \left( X(m) \cdot \int_{t=m}^{n-1} \frac{f(t)}{F(t)^{\frac{3}{2}}} dt \right)\\
& < & X(m) \cdot \frac{K}{F(m)^{\frac{1}{2}}} \textrm{ for some $K>0$. \textbf{QED Claim 2.} }\\
\end{eqnarray*}

We may now prove the proposition. We proceed by defining a sequence of times: $n_0 = t_0$. Let $n_{i+1}$ be the least $n$ (if any exists) such that $X(n) < \frac{1}{2}X \left(n_{i} \right)$. Otherwise $n_{i+1} = \infty$.

The trick is to apply the Kolmogorov-Doob inequality (see for instance \cite{KK}) to $\tilde{X}_{n_i}(n)$:

\begin{eqnarray*}
\Prob ( n_{i+1} < \infty ||n_{i} < \infty) & = & \Prob \left(  \min_{n \geq n_i} X(n) < \frac{1}{2} X(n_i) {\Big|\Big|} X(n_i)\right)\\
& \leq & \Prob \left(  \max_{n \geq n_i} \tilde{X}_{n_i}(n) > \frac{1}{4} X(n_i)^2 {\Big|\Big|} X(n_i)\right) \\
& = & \lim_{N\to \infty} \Prob \left(  \max_{n: N \geq n \geq n_i} \tilde{X}_{n_i}(n) > \frac{1}{4} X(n_i)^2 {\Big|\Big|} X(n_i)\right) \\
& \leq & \frac{4}{X(n_i)^2} \cdot \lim_{N \to \infty} \E (\tilde{X}_{n_i}(N) || X(n_i))\\
&  =  & O \left( \frac{4}{X(n_i)^2} \cdot \frac{1}{F(n_i)^{\frac{1}{2}}} \cdot X(n_i) \right) \\
& = & O \left( \frac{1}{d(n_i)} \right). 
\end{eqnarray*}

It follows from Corollary \ref{coro:dinf} that $\Prob ( n_{i+1} < \infty ||n_{i} < \infty )\to 0$ as $i \to \infty$, from which the result follows. \end{proof}

We record one more result regarding the martingale $X(t)$: 
\begin{coro} \label{coro:L2}
Suppose that Assumption \ref{assumption:ass} (2) holds. Then the martingale $X(t)$ is bounded in $\mathcal{L}_2$, that is to say $\sup_{t} \E \left( X(t)^2 \right) < \infty$. 
\end{coro}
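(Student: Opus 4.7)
The plan is to recycle the calculations already done in the proof of Claim 2 inside Proposition \ref{prop:poslimit}, but to take \emph{unconditional} expectations rather than conditioning on $X(m)$. Specifically, from that proof we extracted the one-step bound
$$\E \bigl( X(t+1)^2 \bigm|\bigm| X(t) \bigr) \ < \ X(t)^2 + \frac{f(t)}{2F(t) \cdot A(t)}\, X(t).$$
Taking expectations on both sides and using the martingale property $\E(X(t)) = \E(X(1))$ (a constant that I will call $c$), this telescopes into
$$\E(X(T+1)^2) \ \leq \ \E(X(1)^2) + c \sum_{t=1}^{T} \frac{f(t)}{2 F(t) \cdot A(t)}.$$

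The remaining task is to show that the series on the right converges. By Proposition \ref{prop:xuexists} (ii) we have $A(t) = \Theta(F(t)^{1/2})$, so the general term is $\Theta \bigl( f(t)/F(t)^{3/2} \bigr)$. Applying Lemma \ref{lem:integral} with $\beta = 3/2$, the partial sum $\sum_{t=1}^{T} f(t)/F(t)^{3/2}$ differs from $\int_{0}^{T} f(s)/F(s)^{3/2}\, ds$ by at most the constant $K_{3/2}$. That integral evaluates explicitly: since $F' = f$, its antiderivative is $-2 F(s)^{-1/2}$, and because $F(s) \to \infty$ (as guaranteed by Assumption \ref{assumption:ass}(1), which forces $F$ unbounded via Lemma \ref{lem:nonzero}-style reasoning, or more simply since $f(t)\geq 1$ eventually), the improper integral $\int^{\infty} f(s)/F(s)^{3/2}\, ds$ is finite.

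Combining these observations yields a uniform bound $\sup_T \E(X(T)^2) < \infty$, which is exactly $\mathcal{L}_2$-boundedness of the martingale. There is no real obstacle here; the only minor subtlety is handling the low values of $t$ where $f(t)=0$ and $F$ might be small, but any contribution before $t = v'$ is a finite constant that simply gets absorbed into $\E(X(1)^2)$ and into the constant $K_{3/2}$ from Lemma \ref{lem:integral}, so it causes no trouble.
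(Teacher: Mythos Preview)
Your proof is correct and is essentially the same argument as the paper's, just packaged differently. The paper bounds the martingale increments $\E\bigl(|X(t+1)-X(t)|^2\bigr)$ directly and sums them (implicitly using orthogonality of martingale differences), arriving at the same estimate $O\bigl(f(t)/F(t)^{3/2}\bigr)$ per term; since for a martingale $\E\bigl(|X(t+1)-X(t)|^2\bigr)=\E(X(t+1)^2)-\E(X(t)^2)$, your telescoping via the Claim~2 inequality is the identical computation, with the small advantage that you avoid re-deriving the one-step bound.
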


\begin{proof}
By a standard result (see for example Theorem 12.1 of \cite{Will}), it is sufficient to show that $\sum_{j=0}^{\infty} \E \left(|X_{j+1} - X_j|^2 \right) < \infty$.

Notice that by Remark \ref{rem:probbound}
\begin{align*}
|X(t+1) - X(t)| & = \frac{d(t+1) - \left(1 + \frac{\mu(t)}{d(t)} \right)d(t)}{A(t+1)}\\
& = \frac{U(t+1)- \mu(t)}{A(t+1)}
\end{align*}

Hence
\begin{align*}
\E \left(|X(t+1) - X(t)|^2 \right)  & = \frac{\V(U(t+1) )}{A(t+1)^2}\\
&   = O \left( \frac{d(t) f(t)}{2F(t)}\left( 1- \frac{d(t)}{2F(t)}\right) \cdot \frac{1}{A(t+1)^2} \right)\\
& =  O \left( \frac{d(t)}{A(t+1)} \cdot \frac{f(t)}{F(t) A(t+1)} \right)\\
&  = O \left( X(t) \cdot \frac{f(t)}{F(t) A(t+1)} \right)\\
&  = O \left( \frac{f(t)}{F(t) A(t+1)} \right)\\
&   =O \left( \frac{f(t)}{F(t)^{\frac{3}{2}}} \right).
\end{align*}
Thus \begin{align*}
\sum_{j=0}^{t} \E \left(|X_{j+1} - X_j|^2 \right)  & = O \left( \int_{j=0}^t \frac{f(j)}{F(j)^{\frac{3}{2}}} dj \right)\\
 & = O \left( K - F(t)^{-\frac{1}{2}} \right) =O(K).
\end{align*}
\end{proof}

\section{Proof of Main result} \label{sec:finalproof}

\begin{defin}
A \emph{witness request} $W$ is a set of pairs of the form $W=\{(u_i,m_i) \ | \  1 \leq i \leq n \}$ where $(u_1, \ldots, u_n)$ is a sequence of nodes and $(m_1, \ldots, m_n)$ an accompanying sequence of non-negative integers.

A \emph{witness} for $W$ is a node connected to each $u_i$ with multiplicity $m_i$. 

We write the event $W[t]$ to mean that $W$ is satisfied by some witness by time $t$.
\end{defin}

Observe from the structure of the process that $W[t] \Rightarrow W[t']$ for all $t' \geq t$. The following is the major step towards our goal:

\begin{prop} \label{prop:mainstep1}
Suppose that $f(t) = \Theta(t)$, and that $G(t_0)$ is a state of the graph at time $t_0$. Let $W$ be a witness request. Let $\eps>0$. Then there exist $t_1 > t_0$ such that $\Prob \Big(W[t_1] \ \big| \big| \ G(t_0) \Big) >1 - \eps$.   
\end{prop}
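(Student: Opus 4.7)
My plan is to show that at each time step $t+1 > t_0$, the conditional probability $q(t)$ that the new node $t+1$ is itself a witness for $W$ admits, on a high-probability event, a uniform positive lower bound. Concretely, the $f(t)$ edges emanating from node $t+1$ in $\MPA$ are drawn independently with endpoint at a pre-existing node $u$ having probability $p_u(t) = d_u(t)/(2F(t))$. Writing $p_i(t):=p_{u_i}(t)$ and $M := \sum_i m_i$, node $t+1$ witnesses $W$ precisely when the multinomial outcome $(m_1,\ldots,m_n,f(t)-M)$ occurs across the cells $(u_1,\ldots,u_n,\text{other})$, which has probability
\[
q(t) \;=\; \binom{f(t)}{m_1,\ldots,m_n,\,f(t)-M}\prod_{i=1}^{n} p_i(t)^{m_i}\Bigl(1-\sum_{j=1}^{n} p_j(t)\Bigr)^{f(t)-M}.
\]

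To bound $q(t)$ below, I would invoke Propositions \ref{prop:xuexists} and \ref{prop:poslimit}: conditional on $G(t_0)$, $d_{u_i}(t)/F(t)^{1/2}\to x_{u_i}$ almost surely with $x_{u_i}\in(0,\infty)$ a.s. Hence there exist $0<a<b$ and $t_*\ge t_0$ such that the event
\[
\mathcal{G} \;=\; \bigl\{\, a\, F(t)^{1/2} \le d_{u_i}(t) \le b\, F(t)^{1/2} \text{ for all } i \text{ and all } t\ge t_*\bigr\}
\]
satisfies $\Prob(\mathcal{G}\mid G(t_0)) > 1 - \eps/2$. The quantitative use of $f(t)=\Theta(t)$ enters here: it forces $F(t)=\Theta(t^2)$, hence $f(t)/F(t)^{1/2}=\Theta(1)$, so on $\mathcal{G}$ each product $f(t) p_i(t)$ lies in a compact subset of $(0,\infty)$. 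A direct estimation of the three factors above (multinomial coefficient of order $f(t)^M$; product $\prod p_i^{m_i}$ of order $F(t)^{-M/2}$; residual factor bounded below, because $f(t)\sum_j p_j(t) = O(1)$) then yields $q(t)\ge c$ for a constant $c=c(a,b,n,m_1,\ldots,m_n) > 0$, uniformly in $t\ge t_*$.

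To conclude, let $\tau$ be the first $t\ge t_*$ at which the degree inequalities defining $\mathcal{G}$ fail, so that $\tau$ is a stopping time with $\mathcal{G}=\{\tau=\infty\}$, and let $B_t$ denote the event that node $t+1$ is not a witness for $W$. On $\{t<\tau\}$ we have $\Prob(B_t\mid \mathcal{F}_t)\le 1-c$, so a routine tower-property induction (peeling the innermost $B_t$ at each step) gives
\[
\Prob\Bigl(\mathcal{G}\cap\bigcap_{t=t_*}^{t_1-1}B_t \;\Big|\; G(t_0)\Bigr)\;\le\;(1-c)^{t_1-t_*},
\]
which is $<\eps/2$ once $t_1$ is large enough. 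Since $\neg W[t_1]\subseteq\bigcap_{t=t_*}^{t_1-1}B_t$, combining with $\Prob(\neg\mathcal{G}\mid G(t_0))\le\eps/2$ yields $\Prob(\neg W[t_1]\mid G(t_0))<\eps$. The main obstacle I anticipate is the second paragraph: carefully isolating $\mathcal{G}$ with fixed parameters $a,b,t_*$ that make the multinomial estimate uniform, while respecting the subtle dependence of the $p_i(t)$ on the random history; once the uniform lower bound on $q(t)$ is in hand, the stopping-time argument is essentially mechanical.
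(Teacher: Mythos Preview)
Your argument is correct and is genuinely cleaner than the paper's. Both proofs start from the same multinomial expression for $q(t)$ and both use Propositions~\ref{prop:xuexists} and~\ref{prop:poslimit} to pin each $d_{u_i}(t)$ to order $F(t)^{1/2}$; the divergence is in how the dependence between successive trials is handled. The paper works over a \emph{finite} window $[t_2,t_2+M)$: it first uses $\mathcal{L}_2$-convergence (Corollary~\ref{coro:L2}) to get the degree bounds $\mathbf{Bo}$ in probability at a single time $t_2$, then introduces an auxiliary ``short tails'' event $\mathbf{Sh}$ and invokes a binomial tail estimate from \cite{Bol} to propagate $\mathbf{Bo}$ across the whole window, so that the $(1-C_4)$ factors can be multiplied. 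You instead fix an \emph{infinite-horizon} event $\mathcal{G}$ on which the bounds hold for all $t\ge t_*$ (legitimate directly from almost-sure convergence to a positive limit), introduce the stopping time $\tau$, and peel off factors via the tower property on $\{\tau>t\}\in\mathcal{F}_t$. This sidesteps both Corollary~\ref{coro:L2} and the tail-bound detour entirely; the only price is that your constants $a,b,c,t_*$ depend on $\eps$, which is harmless here. The paper's route is more explicit about the window length $M$, but that explicitness is not used downstream.
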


\begin{proof}
We consider only stages from $t_0+1$ onwards, and everything that occurs is conditioned upon $G(t_0)$, which we shall therefore suppress.

Suppose $W=\{(u_i,m_i) \ | \  1 \leq i \leq n \}$. We shall write $m=\sum_{i=1}^n m_i$, and, abusing notation, $U_i=U_{u_i}(t+1)$, meaning the number of new edges which $u_i$ gains at the $t+1$st stage, taking the dependency on $t$ as given when the intended value is obvious. Similarly we write $d_i$ for $d_{u_i}(t)$. (We shall not consider $d_j$ for any $j$ other than the $u_i$, so this will not cause confusion.)

We shall employ vector notation, writing $\textbf{U}(t+1):=\textbf{U}=(U_1,\ldots,U_n)$ and $\textbf{m}:=(m_1,\ldots,m_n)$. Thus our focus is the event $\textbf{U}=\textbf{m}$. Let us first compute the probability of this event in terms of the $d_i$. The relevant distribution is multinomial $M(f,p_i, \ldots, p_n,q)$ where $p_i = \frac{d_i}{2F}$ and $q = 1 - \sum p_i$ (again omitting the dependencies on $t$). Therefore
\begin{align*}
\Prob &\left(\textbf{U}=\textbf{m} \ \big| \big| \  d_1, \ldots d_n \right) \\
 & = \frac{f!}{m_1!\cdot \ldots m_n! \cdot (f - m)!} \cdot q^{f-m} \cdot \prod_{i} {p_i}^{m_i}\\
&= \Theta \left( \left( 1-\frac{\sum_i d_i}{2F} \right)^{f-m} \cdot \left( \frac{f}{2F} \right)^m \cdot \prod_{i} d_i^{\ m_i} \right)
\end{align*}
noticing that $\frac{f!}{(f - m)!} \sim f^{m}$. 

Now we employ our assumption that $f(t) = \Theta (t)$ from which it also follows that $\frac{1}{2F} = \Theta\left( \frac{1}{t^2} \right) $ and $\frac{f}{2F} = \Theta\left( \frac{1}{t} \right) $. Thus there exist constants  $c_1, c_2, C_0, N>0$ depending only on $G_0(t_0)$ such that for all $t \geq N$,
\begin{equation} \label{equation:firstbound}\Prob \left(\textbf{U}=\textbf{m} \ \big| \big| \  d_1, \ldots d_n \right) \geq C_0 \cdot \left( 1-\frac{\sum_i d_i}{c_1 t^2} \right)^{c_2 t-m} \cdot t^{- m} \cdot \prod_{i} d_i^{\ m_i}. \end{equation}

Our aim is to bound this probability below, away from $0$ over a long enough range of $t$. We write $X_i = \frac{d_i}{A_i}$ for the martingale supplied by Proposition \ref{prop:xuexists}, with $x_i:=x_{u_i}>0$ for  its limit supplied by Proposition \ref{prop:xuexists} and Lemma \ref{prop:poslimit}. We will not attempt to condition on the actual values $x_i$, but only on the fact that these values are not extreme ($\textbf{NE}$).

First, choose $\kappa_1, \kappa_2 >0$ such that
\begin{equation*}
\kappa_1 t <A(t) < \kappa_2 t
\end{equation*}
for all large enough $t$. This is guaranteed to occur by Proposition \ref{prop:xuexists} (iii) since $F(t)^{\frac{1}{2}}  = \Theta (t)$. We increase $N$ if necessary to ensure that this holds. Notice that since $A(t)$ is entirely predictable in advance, the value of $N$ remains dependent only on $G_0(t_0)$. 

Now, for any $y_2>y_1>0$, define the following event:
$$\textbf{NE} (y_1,y_2): \ \ \ \left(  \bigwedge_{i=1}^n y_1 < x_i < y_2 \right).$$

We shall apply this in the following case: given $\delta>0$ choose $y_2(\delta)> y_1(\delta)>0$ so that $\Prob(\neg \textbf{NE} (y_1,y_2)) < \delta$. (We shall specify $\delta$ later, and will only need to consider one such value. Thus we shall consider $\delta$ fixed for the purposes of what follows.)

By Corollary \ref{coro:L2},  $X_i(t) \to x_i$ in expectation, and thus in probability. More precisely, for any $\eta>0$, we may increase $N>0$ by some quantity depending only on $\eta$ so that for all $t \geq N$ and all $i \leq n$
$$\E \Big( |X_i(t) - x_i| \Big) < \left(\frac{\eta}{n} \right)^2.$$ 
Thus, by Markov's inequality
   $$\Prob\left(|X_i(t) - x_i| > \frac{\eta}{n} \right) < \frac{\eta}{n}.$$
Hence defining the event that all the $X_i(t)$ are close ({\bf Cl}) to their respective $x_i$
\begin{equation*} \textbf{Cl}(t, \eta):= \ \ \bigwedge_{i=1}^n \left( |X_i(t) - x_i| < \frac{\eta}{n} \right)\end{equation*}
we have for all $t \geq N$
\begin{equation} \label{equation:zbound} \Prob\left(\textbf{Cl} (t, \eta) \right) >1- \eta.\end{equation}
Again, we shall pick a value of $\eta$ later. Notice also that
\begin{equation*}
\Prob\left(\textbf{Cl} (t, \eta) \right) < \Prob\left(\textbf{Cl} (t, \eta) \ \big| \big| \ 
\textbf{NE} (y_1,y_2) \right) + \delta.
\end{equation*}
So \begin{equation}
\Prob\left(\textbf{Cl} (t, \eta) \ \big| \big| \ \textbf{NE} (y_1,y_2) \right) > 1 - \eta - \delta.
\end{equation}

Next, we define a bound for $d_i(t)$. Given $\delta,\eta >0$ as before, let $b_1(\eta)=b_1(\delta,\eta) := \kappa_1 \cdot \left( y_1 - \frac{\eta}{n} \right)$ and $b_2(\eta)= b_2(\delta, \eta) := \kappa_2 \cdot \left( y_2 + \frac{\eta}{n} \right)$, insisting that $\eta$ is small enough that $b_1>0$. Then we define the event
$$\textbf{Bo}(t, b_1, b_2)  :=  \ \ \ \bigwedge_{i=1}^n \left( b_1 \cdot t < d_i(t) < b_2 \cdot t \right).$$
Observe now that for $t \geq N$ \begin{equation} \label{equation:implic} \left( \textbf{NE} (y_1(\delta),y_2(\delta)) \ \& \ \textbf{Cl}(t, \eta) \right) \Rightarrow \textbf{Bo}(t, b_1(\eta), b_2(\eta)). \end{equation}
Hence
$$\Prob \left( \textbf{Bo}(t, b_1, b_2) \Big|\Big| \textbf{NE} (y_1(\delta),y_2(\delta)) \right) \geq 1 - \eta - \delta.$$
 Thus we obtain the unconditional bound:
\begin{equation} \label{equation:uncon}
\Prob \left( \textbf{Bo}(t, b_1, b_2) \right) \geq (1 - \eta - \delta)(1 - \delta).
\end{equation}

Now, we use the bound obtained in (\ref{equation:firstbound}) above, and see that whenever $b_1 \leq b_1' < b_2' \leq b_2$ 
\begin{eqnarray*}
\Prob \Big( \textbf{U}=\textbf{m}  \ & \Big| \Big| & \ \textbf{Bo}(t,b_1',b_2') \Big)\\
& > & C_0 \cdot  \left( 1-\frac{n \cdot b_2 \cdot t}{c_1 t^2} \right)^{c_2 t-m} \cdot t^{-m}\cdot \left(b_1\cdot t\right)^m\\
& = & C_0 \cdot b_1^m \cdot \left( 1-\frac{nb_2}{c_1 t} \right)^{c_2 t-m}\\
& = & C_0 \cdot b_1^m \cdot \left( 1-\frac{nb_2c_2}{c_1}\cdot \frac{1}{c_2 t} \right)^{c_2 t} \cdot \left( 1-\frac{nb_2}{c_1t} \right)^{-m}\\
& \to & C_0 \cdot b_1^m \cdot e^{-\frac{nb_2c_2}{c_1}}:=C_3>0.
\end{eqnarray*}

Hence, by letting $C_4 = C_4 (\delta, \eta):=\frac{1}{2}C_3$ and increasing $N$ again if necessary (and again by some predictable amount), we have for all $t \geq N$
\begin{equation} \label{equation:Ubound}
\Prob\left( \textbf{U}=\textbf{m}  \ \Big| \Big| \ \textbf{Bo}(t,b_1',b_2') \right)>C_4.\end{equation}

Now for any $\zeta>0$, we may let $M=M(\zeta,\delta,\eta)$ be large enough that $\left(1 - C_4\right)^M < \zeta$. The goal therefore is to locate $M$ places where $\textbf{Bo}(t,b_1(\eta),b_2(\eta))$ holds, and argue that the probability that all of them fail to produce an instance of $\textbf{U}=\textbf{m}$ is bounded above by $\zeta$.

Notice that bound (\ref{equation:Ubound}) holds independently for all $t \geq N$: the arguments are unaffected by previous values of $\textbf{U}$ so long as $\textbf{Bo}(t, b_1', b_2')$ holds. However, the same is not true for bound (\ref{equation:uncon}). By conditioning on whether or not $\textbf{U}(t')=\textbf{m}$ holds, we risk affecting $\Prob\left(\textbf{Bo}(t, b_1(\eta), b_2(\eta))\right)$ for $t > t'$.

To navigate this obstacle, we shall locate a range $[t_2, t_2+M)$ within which the bound $\textbf{Bo}(t,b_1(\eta),b_2(\eta))$ is guaranteed to hold, barring a certain extreme event $\neg \textbf{Sh}$ defined below, which will have a  probability bounded above by $\theta$ for arbitrarily small $\theta$.

We wish $t_2$ to satisfy the tighter bound $\textbf{Bo}(t_2,b_1 ( \tfrac{\eta}{2}),b_2 ( \tfrac{\eta}{2}))$. Notice that appropriate adaptations of (\ref{equation:zbound}), (\ref{equation:implic}), and (\ref{equation:uncon}) above guarantee that for large enough $t_2$,
\begin{equation} \label{equation:b2bound}
\Prob \left(\neg \textbf{Bo} \left( t_2,b_1 \! \left( \tfrac{\eta}{2} \right),b_2 \! \left( \tfrac{\eta}{2} \right) \right) \right) < \! \tfrac{\eta}{2} + 2 \delta.
\end{equation}

However, as already indicated, $\textbf{Bo} \left( t_2,b_1 \! \left( \tfrac{\eta}{2} \right),b_2 \! \left( \tfrac{\eta}{2} \right) \right)$ on its own is not quite enough to guarantee $\textbf{Bo}(t_2 + j ,b_1(\eta),b_2(\eta))$ for $j \leq M$. So let us describe the extra ingredient we require. Notice that $U_i(t)$ is a binomial distribution with a long right tail, since the number of trails $f(t)$ is of the order of $t$, and the probability of success per trial is $\frac{d_i(t)}{2F(t)}$ which is of order $\frac{1}{t}$. We shall show that we may ignore the extremity of this tail, thus allowing us to impose a tighter upper bound than $f(t)$ on $U_i(t)$ for all $t \in [t_2, t_2+M)$.

In Theorem 1.1 from \cite{Bol}, we find a useful bound for the right-tail of a binomial distribution $U \sim b(f,p)$: if $u>1$ and $1 \leq S: = \lceil ufp \rceil \leq f-1$ then 
$$\Prob(U \geq S) < \left(\frac{u}{u-1} \right) \cdot \Prob(U = S).$$

Let us apply this in the case $S = \lceil t^{\alpha} \rceil$ for some fixed $\alpha \in \left( \frac{1}{2}, 1 \right)$. (Its exact value does not matter.) Then 
$$u = u(t)  = \frac{t^{\alpha}}{ p f} = \frac{t^{\alpha} 2 F(t)}{d(t) f(t)}.$$

Assembling the bounds $c_1 t \leq f(t) \leq c_2 t$ and $c_1 t^2 \leq 2F(t) \leq c_2 t^2$ and $\textbf{Bo}(t, b_1', b_2')$ where $b_1 \leq b_1' < b_2' \leq b_2$ and  employing the standard bound for the binomial coefficiant $\begin{pmatrix} f \\ S \end{pmatrix} \leq \left( \dfrac{f\cdot e}{S} \right)^S$, we find

\begin{align*}
\Prob \Big(U_i  & \geq t^{\alpha} \  \Big| \Big| \ \textbf{Bo}(t, b_1', b_2') \Big)\\
 & < \left(\frac{\frac{c_2}{c_1 b_1} t^{\alpha}}{\frac{c_1}{c_2 b_2} t^{\alpha}-1} \right) \cdot \left( (e c_2 +1) \cdot t^{1 - \alpha}  \right)^{t^{\alpha}}\cdot \left( \frac{b_2}{c_1 t} \right)^{t^\alpha} \cdot \left(1 - \frac{b_1}{c_2 t} \right)^{t - \lceil t^\alpha \rceil}\\
& < \left( \frac{B}{t^{\alpha}} \right)^{t^{\alpha}}.
\end{align*}
for some $B>0$. Notice again that this holds independently of the specific values of $b_1'$ and $b_2'$, so long as $b_1 \leq b_1' < b_2' \leq b_2$. Now we define a new event, that the tails are short (\textbf{sh}):
$$\textbf{sh}(t):=\bigwedge_{i=1}^n U_i(t+1)< t^{\alpha}.$$

After increasing $B$ to allow for the non-independence of the $n$ different $U_i$ we now see that: 
\begin{equation} \label{eqution:shbound}
\Prob \left( \neg \textbf{sh}(t) \ \Big| \Big| \ \textbf{Bo}(t,b_1(\eta),b_2(\eta)) \right) < n \cdot \left( \frac{B}{t^{\alpha}} \right)^{t^{\alpha}}.
\end{equation}

Putting these events together, define
$$\textbf{Sh}(t_2):=\forall t \in [t_2,t_2+M) \ \ \textbf{sh}(t).$$

To obtain a similar bound for $\Prob \left(\neg \textbf{Sh}(t_2) || \textbf{Bo} \left( t_2,b_1 \! \left( \tfrac{\eta}{2} \right) ,b_2 \! \left( \tfrac{\eta}{2} \right) \right) \right)$ we first show that for $j \leq M$
\begin{align} \label{equation:Shimpl}
\bigg( \textbf{Bo} \left( t_2,b_1 \! \left( \tfrac{\eta}{2} \right) ,b_2 \! \left( \tfrac{\eta}{2} \right) \right) \ & \& \ \bigwedge_{i=0}^{j-1} \textbf{sh}(t_2+j-1) \bigg) \\  \nonumber & \Rightarrow \textbf{Bo}(t_2 + j ,b_1(\eta),b_2(\eta)).
\end{align}

Suppose that $\textbf{Bo} \left( t_2,b_1 \! \left( \tfrac{\eta}{2} \right),b_2 \! \left( \tfrac{\eta}{2} \right) \right)$ holds. We address the lower bound first, for which we do not require the hypothesis on $\textbf{sh}$. Instead, for all $j \leq M$, clearly $$d(t_2 +j) \geq d(t_2) \geq b_1 \! \left( \tfrac{\eta}{2} \right) \cdot t_2 = \kappa_1 \cdot \left( y_1 - \frac{\eta}{2n} \right) \cdot t_2.$$ 
If additionally $t_2 \geq  \frac{2nM y_1}{\eta}$, then the final term above exceeds $$\kappa_1 \cdot \left( y_1 - \frac{\eta}{n} \right) \cdot (t_2+M) \geq b_1(\eta) \cdot (t_2+j).$$

Now we obtain the corresponding upper bound. By our assumption on $\textbf{sh}$,
\begin{align*}
d(t_2+j) & \leq d(t_2) + M \cdot (t_2 + M)^{\alpha}\\
& \leq  \kappa_2 \cdot \left( y_2 + \frac{\eta}{2n}\right) \cdot t_2 + M \cdot (t_2 + M)^{\alpha}\\
& \leq  \kappa_2 \cdot \left( y_2 + \frac{\eta}{n}\right) \cdot t_2
\end{align*}
if $t_2 \geq \max \left\{ M, \left( \frac{4Mn}{\kappa_2 \eta} \right)^{\frac{1}{1 - \alpha}} \right\}$, which completes the proof of Implication (\ref{equation:Shimpl}).

Implication (\ref{equation:Shimpl}) allows us to take the $M$-fold sum of (\ref{eqution:shbound}), finding  
$$\Prob \left(\neg \textbf{Sh}(t_2) \ \Big| \Big| \ \textbf{Bo} \left( t_2,b_1 \! \left( \tfrac{\eta}{2} \right) ,b_2 \! \left( \tfrac{\eta}{2} \right) \right) \right) < \sum_{t=t_2}^{t_2+M} n \left( \frac{B}{t^{\alpha}} \right)^{t^{\alpha}} \to 0$$ as $t_2 \to \infty$. Thus for any $\theta>0$ for all large enough $t_2$ we have 
\begin{equation} \label{equation:thetabound}
\Prob\left( \neg \textbf{Sh}(t_2) \ \Big| \Big| \ \textbf{Bo} \left( t_2,b_1 \! \left( \tfrac{\eta}{2} \right) ,b_2 \! \left( \tfrac{\eta}{2} \right) \right) \right) < \theta.
\end{equation}

Finally, we may complete the argument, setting $\delta = \frac{\eps}{8}$ and $\theta = \zeta = \frac{\eps}{4}$ and $\eta = \frac{ \eps}{2}$  and $t_1:=t_2+M$. For large enough $t$, we may update bound (\ref{equation:Ubound}) to get 

\begin{equation*} 
\Prob\left( \left( \neg \textbf{U}(t_2 +1)=\textbf{m} \right) \ \& \  \textbf{sh}(t_2)  \ \Big| \Big| \ \textbf{Bo} \left( t_2,b_1 \! \left( \tfrac{\eta}{2} \right) , b_2 \! \left( \tfrac{\eta}{2} \right) \right) \right)< 1 - C_4.\end{equation*}
Similarly, 
\begin{align*} 
\Prob\Big( \big( \neg \textbf{U}(t_2+j+1) =\textbf{m} \big) &\  \&  \ \textbf{sh}(t_2+j) \\  \  & \Big| \Big| \ \bigwedge_{i=0}^{j-1} \textbf{sh}(t_2+i) \ \& \  \textbf{Bo} \left( t_2,b_1 \! \left( \tfrac{\eta}{2} \right) , b_2 \! \left( \tfrac{\eta}{2} \right) \right) \Big) \\ & \ \ \ \ \ \ < 1 - C_4.\end{align*}
As observed earlier, these bounds hold independently of the previous values of $\textbf{U}$, meaning that 
\begin{align*} 
\Prob\Big( \big( & \neg \textbf{U}(t_2+j+1) =\textbf{m} \big) \  \&  \ \textbf{sh}(t_2+j) \\  \  & \Big| \Big| \ \bigwedge_{i=0}^{j} \neg \textbf{U}(t_2+i) \ \& \ \bigwedge_{i=0}^{j-1} \textbf{sh}(t_2+i) \ \& \  \textbf{Bo} \left( t_2,b_1 \! \left( \tfrac{\eta}{2} \right) , b_2 \! \left( \tfrac{\eta}{2} \right) \right) \Big) \\ & \hspace{5cm} < 1 - C_4.\end{align*}

Taking the product of these bounds, and denoting the failure of our desired result by $\textbf{Fa}(t_2):= \forall t \in [t_2, t_2+M) \  \left( \textbf{U}(t+1) \neq \textbf{m} \right)$, we see that 
$$\Prob \Big( \textbf{Fa}(t_2) \ \& \ \textbf{Sh}(t_2)  \ \Big| \Big| \ \textbf{Bo}(t_2,b_1 \! \left( \tfrac{\eta}{2} \right) , b_2 \! \left( \tfrac{\eta}{2} \right)  \Big) < \zeta$$
and so by bounds (\ref{equation:b2bound}) and (\ref{equation:thetabound})
\begin{align*}
\Prob \Big( \textbf{Fa}(t_2) \Big) &  <  \zeta + \theta + \tfrac{\eta}{2} + 2\delta = \eps.
\end{align*}
\end{proof}

We may now complete the proof of our main result, which we first restate for the reader's convenience:

\mainmulti*

\begin{proof}
First notice that there are countably many witness requests. Thus we may organise them into a list $\left( W_j : j \geq 1 \right)$.

Let $\eps >0$. Again everything that occurs is conditioned upon $G_0(t_0)$. We shall show that the probability of all witness requests eventually being satisfied exceeds $1-\eps$. Suppose inductively that we have found time $t_j$ so that so that $\Prob (\bigwedge_{i=1}^j W_i[t_j]) > 1- \left( 1 - \frac{1}{2^j} \right) \eps$.

Let $\mathcal{G} = \mathcal{G}_j$ be the set of all states $G=G(t_j)$ of the graph at time $t_j$ consistent with $G_0(t_0)$ and with $\bigwedge_{i=1}^j W_i[t_j]$. Notice that $\mathcal{G}$ is a finite set, that ${\bf P} \left( G(t_j) \  \big| \big| \ G_0(t_0) \right)>0$ for each $G \in \mathcal{G}$, and by assumption that $\sum_{G \in \mathcal{G}} {\bf P} \left( G(t_j) \  \big| \big| \ G(t_0) \right) > 1- \left( 1 - \frac{1}{2^j} \right) \eps$.

Consider now $W_{j+1}$ and let $\eps' < \frac{1}{2^{j+1}} \eps$. Now given each $G^{(k)} \in \mathcal{G}$, by Proposition \ref{prop:mainstep1} there exist $t^{(k)} \geq t_j$ such that $${\bf P} \left( W_{j+1} \left[ t^{(k)} \right] \ \big| \big| \ G^{(k)}(t_j) \right) >1 - \eps'.$$

Let $t_{j+1} := \max \{t^{(k)} \ | \ G^{(k)} \in \mathcal{G}\}$. Then 
\begin{align*}
& \Prob\left(\bigwedge_{i=1}^{j+1} W_i[t_{j+1}] \ \big| \big| \ G_0(t_0) \right) \\ 
\geq & \ \sum_{k} \Prob\left(\bigwedge_{i=1}^{j+1} W_i[t_{j+1}] \ \big| \big| \  G^{(k)}(t_j) \right) \cdot \Prob\left(G^{(k)}(t_j)\ \big| \big| \  G_0(t_0) \right)\\
= & \ \sum_{k} \Prob \left(W_{j+1}[t_{j+1}] \ \big| \big| \  G^{(k)}(t_j) \right) \cdot \Prob\left(G^{(k)}(t_j) \ \big| \big| \  G_0(t_0) \right)\\
> & \  \sum_{k} (1 - \eps') \cdot \Prob\left(G^{(k)}(t_j) \ \big| \big| \  G_0(t_0) \right)\\
> & \ (1 - \eps') \left(1- \left( 1 - \frac{1}{2^j} \right) \eps \right)
> 1- \left( 1 - \frac{1}{2^{j+1}} \right)\eps. \end{align*} \end{proof}

\section{Future Work} \label{sec:furtherwork}
We close this paper with a short discussion of possible future directions of study. As noted earlier, one goal is to translate the current work into the domain of graphs (rather than multigraphs) by proving Conjecture \ref{conj:mainconj}, which we restate:

\mainconj*

A second avenue to investigate is the limit of an $\textrm{MPA}_f$ process when $f$ is strictly between the constant case (considered by Kleinberg and Kleinberg in \cite{KK}) and the linear growth rate analysed here. A natural starting point would be the case $f(t) = \Theta \left(\sqrt{t} \right)$. One might hope somewhere within this regime to identify a connection to (a multigraph analogue of) the theory of Shelah-Spencer sparse random graphs as elucidated in \cite{JS}. The author recently established a connection between Shelah-Spencer graphs and the limits of finitary random processes in \cite{Elw2}, albeit in a context rather simpler than preferential attachment.\\

Thirdly, a central role in the contemporary study of graph limits is played by the theory of graphons, as developed, for instance, by Lov\'asz in \cite{LL}. Thus it is natural to seek to connect the current work to that body of knowledge. Although graphons as originally conceived do not allow for multi-edges, in \cite{KLS} a theory of convergence of sequences of multigraphs is developed within the broader setting of \emph{decorated graphs} and \emph{Banach space valued graphons}, so this is an initial point of contact to consider. (I am grateful to the anonymous reviewer for bringing this to my attention.)

\subsection*{Aknowledgements}
I thank this paper's anonymous reviewers for their thoughtful and helpful remarks.

\end{document}